\documentclass[12pt]{amsart}
\newtheorem{lemma}{Lemma}[section]
\newtheorem{theorem}[lemma]{Theorem}

\newtheorem{exercise}[lemma]{Exercise}

\usepackage{latexsym}
\addtolength{\headheight}{1.15pt}

\begin{document}
\title{Mathematical Musings of a Urologist}
\author{John R. Akeroyd; Robert K. Powers; Ganesh Rao, M.D.}
\date{August 1, 2023\\}

\begin{abstract} The derivatives with respect to the variable $r$ of $\pi r^2$ and $\frac{4}{3}\pi r^3$ are
$2\pi r$ and $4\pi r^2$, respectively. This relates, through the derivative, the area enclosed in a circle to the 
length of that circle and, likewise, the volume of a sphere to the surface area of that sphere. The reasons why 
this works are basic to a first course in calculus. In this brief article, we expand on these ideas to shapes other 
than circles and spheres. Our approach is with the first year calculus student in mind.
\end{abstract}
\maketitle

\section{Introduction and the Reasons Why}
\bigskip
Urologists are natuarally interested in surface area versus rate of change in volume of spherically shaped objects. 
We review connections between these things without pretending that anything we say in this brief article is fundamentally new.
In fact, numerous articles have been written on this topic -- we mention two: \cite{JM} and \cite{JT}.
Our main result, Theorem 1.4, is accessible to the first year calculus student. As mentioned in the abstract, the circumference of a circle of radius $r$
is the derivative with respect to $r$ of the formula for the area enclosed in that circle, and likewise with the surface area of 
a sphere of radius $r$ and the formula for the 
volume of the region bounded by that sphere. In the case of the circle of radius $r$ (see \cite{WT}),
$$\pi r^2 = \int_0^{2\pi} \int_0^r \rho\, d\rho\,d\theta = \int_0^r 2\pi \rho\, d\rho.$$
And in the case of the sphere,
$$\frac{4}{3}\pi r^3 = \int_0^\pi\int_0^{2\pi}\int_0^r \rho^2\sin{\varphi}\,d\rho\, d\theta\, d\varphi   = \int_0^r 4\pi \rho^2\, d\rho.$$
In the first case, the integral in terms of polar coordinates gives the area of the  region enclosed by a circle of radius $r$ as an integral of the 
circumference formula for a circle of radius $\rho$, $0 < \rho < r$. And in the second, the integral in terms of spherical coordinates gives
the volume of the region bounded by a sphere of radius $r$ as an integral of the surface area formula for a sphere of radius $\rho$, $0 < \rho < r$. 
So, it is as basic as the Fundamental Theorem of Calculus, and certainly no accident, that the derivatives of the area and volume formulae here are 
circumference and surface area formulae, respectively. In the case of a square of sidelength $x$, the area enclosed is $x^2$ and the perimeter is $4x$. 
Clearly, the derivative of $x^2$ does not equal $4x$. Things work out nicely, though, if we express terms differently: In a way that is more akin to polar 
and spherical coordinates. Given a square $S$, let $r$ be the radius of the largest circle that can be inscribed in $S$. Then the sidelength of $S$ is $2r$ 
and the area of the region enclosed by $S$ is $4r^2$. So, the derivative with respect to $r$ of the area  of the region enclosed by $S$ is $8r$, which is 
the length of $S$ as a curve. And if $T$ is an equilateral triangle and $r$ is the radius of the largest circle that can be inscribed in $T$, then one-half the 
base of $T$ is $\sqrt{3}r$ and the height of $T$ is $3r$; and so the area of the region enclosed by $T$ is $3\sqrt{3}r^2$. Whence, the derivative with 
respect to $r$ of the area enclosed by $T$ is $6\sqrt{3}r$, which is the length of $T$ as a curve. This method works for any regular $n$-gon. We pose 
this as an exercise. The reader may consult \cite{JM} for this result.
\bigskip

\begin{exercise}\rm{Let $W$ be a regular $n$-gon and let $r$ be the radius of the largest circle that can be inscribed in $W$. Let $A(n, r)$ be the 
area of the region enclosed by $W$ and let $L(n, r)$ be the length of $W$ as a curve. Show that the derivative of $A(n, r)$ with respect to $r$ is $L(n, r)$.}  
\end{exercise}
\bigskip

\noindent These ideas extend to three dimensions in the cases of a cube and other solids such as a tetrahedron, whose four faces are identical 
equilateral triangles. In the three dimensional case, the radius of the largest inscribed sphere is the variable $r$. For the cube,
the sidelength is $2r$, the volume enclosed by the cube is $8r^3$ and the surface area is $24r^2$, which is the derivative with respect to $r$ of the 
volume formula here. The details for our tetrahedron are more cumbersome, but the idea is the same. With these regular curves in $\mathbb{R}^2$ 
and regular surfaces in $\mathbb{R}^3$ that are topologically equivalent to a sphere, formulae for curve length, area enclosed, surface area and volume 
enclosed are not difficult to calculate. But, we really do not need regularity to makes things work. We first set our terminology. For an $n$-gon, the \textit{edges} 
are the line segments that join the vertices and for a surface that is made up of planar regions, the planar regions are called \textit{facets}. So, a triangle has 
three edges and a tetrahedron has four facets. The essential requirement that we need to make things work is that there is a largest inscribed circle (or a largest 
inscribed sphere, in the three dimensioinal case) and that this circle (or sphere) be tangent to each edge (or each facet). The next two lemmas are straightforward, 
but they get to the central issue here. We omit their proofs since they are well-known. For a vector $\vec{x} = (x_1, x_2, ..., x_n)$ in
$\mathbb{R}^n$, the Euclidean norm of $\vec{x}$ is given by $\|\vec{x}\| := \sqrt{x_1^2 + x_2^2 + \cdot\cdot\cdot x_n^2}$.
\bigskip

\begin{lemma} Let $\mathcal{L}$ be a line in the Cartesian plane $\mathbb{R}^2$ that does not pass through the origin and let $c$ be a positive real number, 
not equal to $1$. Then $c\mathcal{L} := \{c\vec{x}: \vec{x}\in\mathcal{L}\}$ is a line in $\mathbb{R}^2$ that is parallel to $\mathcal{L}$, there exists $\vec{x_0}$ in 
$\mathcal{L}$ such that $\|\vec{x_0}\| \leq |\vec{x}\|$ for all $\vec{x}\in\mathcal{L}$ and the distance between $\mathcal{L}$ and $c\mathcal{L}$ is $|1-c|\|\vec{x_0}\|$.
\end{lemma}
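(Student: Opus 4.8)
The plan is to put $\mathcal{L}$ into its normal (Hesse) form. Since $\mathcal{L}$ does not pass through the origin, there is a unit vector $\vec{n}$ and a nonzero scalar $d$ with $\mathcal{L} = \{\vec{x}\in\mathbb{R}^2 : \vec{x}\cdot\vec{n} = d\}$. First I would verify the claim about $c\mathcal{L}$: a point $\vec{y}$ lies in $c\mathcal{L}$ exactly when $\vec{y}/c\in\mathcal{L}$, i.e. when $\vec{y}\cdot\vec{n} = cd$; since $c$ is positive this scaling is invertible, so $c\mathcal{L} = \{\vec{y} : \vec{y}\cdot\vec{n} = cd\}$ is again a line, with the same unit normal $\vec{n}$ and hence parallel to $\mathcal{L}$. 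Because $d\neq 0$ and $c\neq 1$ we have $cd\neq d$, so $c\mathcal{L}$ is a line genuinely distinct from, but parallel to, $\mathcal{L}$.

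Next I would identify the point of $\mathcal{L}$ nearest the origin. The natural candidate is the foot of the perpendicular from the origin, namely $\vec{x_0} := d\vec{n}$, which satisfies $\vec{x_0}\cdot\vec{n} = d$ and so lies on $\mathcal{L}$, with $\|\vec{x_0}\| = |d|$. To see it is the minimizer, take any $\vec{x}\in\mathcal{L}$ and write $\vec{x} = \vec{x_0} + \vec{w}$ where $\vec{w} = \vec{x} - d\vec{n}$; since $\vec{w}\cdot\vec{n} = d - d = 0$, the vectors $\vec{x_0}$ and $\vec{w}$ are orthogonal, and the Pythagorean theorem gives $\|\vec{x}\|^2 = \|\vec{x_0}\|^2 + \|\vec{w}\|^2 \geq \|\vec{x_0}\|^2$. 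This establishes the second assertion (and incidentally shows $\vec{x_0}$ is unique, though the statement does not require that).

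Finally, for the distance between the two parallel lines $\vec{x}\cdot\vec{n} = d$ and $\vec{y}\cdot\vec{n} = cd$, I would use that, because $\vec{n}$ is a unit vector, the distance from a point $\vec{p}$ to the line $\vec{x}\cdot\vec{n} = k$ is $|\vec{p}\cdot\vec{n} - k|$. Evaluating at $\vec{p} = \vec{x_0}$ gives the distance $|cd - d| = |c-1|\,|d| = |1-c|\,\|\vec{x_0}\|$, which is the asserted formula. Alternatively, one can note that $c\vec{x_0} = cd\,\vec{n}$ is the point of $c\mathcal{L}$ nearest the origin, that $\vec{0}$, $\vec{x_0}$, $c\vec{x_0}$ are collinear along $\vec{n}$, which is perpendicular to both lines, and hence the distance between the lines equals $\|c\vec{x_0} - \vec{x_0}\| = |c-1|\,\|\vec{x_0}\|$. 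None of this presents a real obstacle; the only points needing care are recording why $c\mathcal{L}$ is a bona fide line (using $c\neq 0$) distinct from $\mathcal{L}$ (using $c\neq 1$ and $d\neq 0$), and being explicit that $\vec{n}$ is normalized so the distance formula carries no stray factor.
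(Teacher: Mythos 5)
Your proof is correct. Note that the paper itself offers no proof of this lemma (it is explicitly omitted as well-known), so there is nothing to compare against; your Hesse normal form argument --- writing $\mathcal{L}=\{\vec{x}:\vec{x}\cdot\vec{n}=d\}$ with $\vec{n}$ a unit normal and $d\neq 0$, taking $\vec{x_0}=d\vec{n}$, using the Pythagorean decomposition for minimality, and reading off the distance $|cd-d|=|1-c|\,\|\vec{x_0}\|$ --- is exactly the standard argument the authors have in mind, and your second, more geometric justification of the distance (that $\vec{0}$, $\vec{x_0}$, $c\vec{x_0}$ are collinear along the common normal) is a nice way to avoid invoking the point-to-line distance formula as a black box.
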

\bigskip

\begin{lemma} Let $\mathcal{P}$ be a plane in $\mathbb{R}^3$ that does not pass through the origin and let $c$ be a positive real number, not equal to $1$.
Then $c\mathcal{P} := \{c\vec{x}: \vec{x}\in\mathcal{P}\}$ is a plane in $\mathbb{R}^3$ that is parallel to $\mathcal{P}$, there exists $\vec{x_0}$ in 
$\mathcal{P}$ such that $\|\vec{x_0}\| \leq \|\vec{x}\|$ for all $\vec{x}\in\mathcal{P}$ and the distance between $\mathcal{P}$ and $c\mathcal{P}$ is $|1-c|\|\vec{x_0}\|$.
\end{lemma}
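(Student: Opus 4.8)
The plan is to represent $\mathcal{P}$ by a normalized linear equation and then read off all three assertions from that representation. First I would choose a unit vector $\vec{n}$ normal to $\mathcal{P}$ together with a scalar $d$ so that $\mathcal{P} = \{\vec{x}\in\mathbb{R}^3 : \vec{n}\cdot\vec{x} = d\}$. Because $\mathcal{P}$ does not pass through the origin we have $d\neq 0$, and after replacing $\vec{n}$ by $-\vec{n}$ if necessary we may assume $d>0$.

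Next I would identify $c\mathcal{P}$. A point $\vec{y}$ lies in $c\mathcal{P}$ exactly when $\vec{y} = c\vec{x}$ for some $\vec{x}$ with $\vec{n}\cdot\vec{x} = d$, that is, exactly when $\vec{n}\cdot\vec{y} = cd$. Hence $c\mathcal{P} = \{\vec{y} : \vec{n}\cdot\vec{y} = cd\}$ is again a plane, with the same unit normal $\vec{n}$, so it is parallel to $\mathcal{P}$; and since $c\neq 1$ and $d\neq 0$ we have $cd\neq d$, so the two planes are genuinely distinct.

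Then I would exhibit the minimum-norm point $\vec{x}_0$. The natural candidate is the foot of the perpendicular from the origin, $\vec{x}_0 := d\vec{n}$, which indeed lies on $\mathcal{P}$ since $\vec{n}\cdot(d\vec{n}) = d\|\vec{n}\|^2 = d$, and which has norm $\|\vec{x}_0\| = |d| = d$. For any $\vec{x}\in\mathcal{P}$, the Cauchy--Schwarz inequality gives $d = |\vec{n}\cdot\vec{x}| \le \|\vec{n}\|\,\|\vec{x}\| = \|\vec{x}\|$, so $\|\vec{x}_0\| \le \|\vec{x}\|$ for every $\vec{x}\in\mathcal{P}$, as required.

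Finally, since $\mathcal{P}$ and $c\mathcal{P}$ are parallel, the distance between them is the distance from any point of one plane to the other; evaluating the standard distance formula for the parallel planes $\vec{n}\cdot\vec{x} = d$ and $\vec{n}\cdot\vec{x} = cd$ with $\vec{n}$ a unit vector yields $|cd - d| = |1-c|\,d = |1-c|\,\|\vec{x}_0\|$. I do not expect any real obstacle here: the only step deserving a word of care is the reduction to a unit normal (and fixing the sign of $d$), after which the argument is a one-line application of Cauchy--Schwarz together with the elementary formula for the distance between parallel planes. The preceding lemma about lines in $\mathbb{R}^2$ is proved by the identical argument, replacing ``plane'' by ``line'' and $\mathbb{R}^3$ by $\mathbb{R}^2$.
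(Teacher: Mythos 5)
Your proof is correct and complete: the reduction to a unit-normal equation $\vec{n}\cdot\vec{x}=d$, the identification $c\mathcal{P}=\{\vec{y}:\vec{n}\cdot\vec{y}=cd\}$, the minimizer $\vec{x}_0=d\vec{n}$ via Cauchy--Schwarz, and the parallel-plane distance $|cd-d|=|1-c|\|\vec{x}_0\|$ are exactly the standard argument. The paper omits the proof of this lemma as well-known, and what you have written is precisely the expected justification (and, as you note, it transfers verbatim to the line version in $\mathbb{R}^2$).
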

\bigskip

\noindent With regard to our lemmas, notice that the circle about the origin in $\mathbb{R}^2$ of radius $\|\vec{x}_0\|$ lies tangent to the line $\mathcal{L}$ and 
intersects $\mathcal{L}$ at $\vec{x_0}$. Moreover, any circle about the origin of radius less than $\|\vec{x}_0\|$ has no intersection with $\mathcal{L}$. And, similarly, 
the sphere about the origin in $\mathbb{R}^3$ of radius $\|\vec{x}_0\|$ lies tangent to the plane $\mathcal{P}$ and intersects $\mathcal{P}$ at  
$\vec{x_0}$. Moreover, any sphere about the origin of radius less than $\|\vec{x}_0\|$ has no intersection with $\mathcal{P}$. Let $W$ be an $n$-gon (not necessarily regular)
in $\mathbb{R}^2$ and suppose that there is a largest circle that can be inscribed inside $W$ and that this circle lies tangent to each of the edges of $W$. Without loss of generality, 
we may assume that this circle is centered at the origin. For any positive real number $c$, let $cW = \{c\vec{x}: \vec{x}\in W\}$. Then $cW$ is an $n$-gon that has edges parallel 
to corresponding edges of $W$ and the distances between these corresponding parallel edges are all the same, independent of the edge; cf., Lemma 1.2. Continuing with this $W$, 
the length of $W$ as a curve is the product of the length of $\frac{1}{r}W$ with $r$ and the area enclosed by $W$ is the product of the area enclosed by $\frac{1}{r}W$ with $r^2$; 
where $\frac{1}{r}W$ is the multiple of $W$ whose largest inscribed circle has radius equal to $1$. So, with $W$ fixed, the length of the curve $W$ and the area of the region 
enclosed by $W$ are functions of the variable $r$, alone. Similar observations carry over to surfaces in $\mathbb{R}^3$ that are topologically equivalent to a sphere and that are 
made up of finitely many facets, each of which has piecewise smooth boundary.
\bigskip

\begin{theorem} Let $W$ be an $n$-gon in $\mathbb{R}^2$ and suppose that there is a largest circle that can be inscribed inside $W$ and that this circle lies tangent to each 
edge of $W$. Let $r$ be the radius of that circle, let $A(r)$ be the area of the region enclosed by $W$ and let $L(r)$ be the length of $W$ as a curve. Then the derivative of $A(r)$ 
with respect to $r$ is $L(r)$. Similarly, let $\Omega$ be a surface in $\mathbb{R}^3$ that is topologically equivalent to a sphere. Further, suppose that $\Omega$ has finitely many 
planar facets each having piecewise smooth boundary and suppose that there is a largest sphere that can be inscribed inside $\Omega$ and that this sphere lies tangent to each facet 
of $\Omega$. Let $r$ be the radius of that sphere, let $V(r)$ be the volume of the region bounded by $\Omega$ and let $\sigma(r)$ be the surface area of $\Omega$. Then the 
derivative of $V(r)$ with respect to $r$ is $\sigma(r)$.
\end{theorem}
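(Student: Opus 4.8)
\bigskip
\noindent\emph{Proof idea.} The plan is to reduce both halves of the theorem to one elementary fact — that a polygon with an inscribed circle tangent to every edge has area $\tfrac12$(inradius)(perimeter), and a polyhedral surface with an inscribed sphere tangent to every facet bounds a region of volume $\tfrac13$(inradius)(surface area) — and then to invoke the scaling behaviour recorded in the paragraph just before the statement. Start with the planar case. Position $W$ so that its largest inscribed circle is centered at the origin; by hypothesis that circle has radius $r$ and is tangent to every edge, so by Lemma 1.2 (and the remark following it) the line carrying each edge lies at distance exactly $r$ from the origin. Joining the origin to each vertex partitions the region enclosed by $W$ into $n$ triangles, one over each edge; the triangle on an edge of length $\ell_i$ has that edge as base and the origin as opposite vertex, hence altitude $r$, and so has area $\tfrac12 r\,\ell_i$. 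Summing over $i$ gives $A(r)=\tfrac12\,r\,L(r)$. Now I would bring in scaling: writing $W_1:=\tfrac1r W$ for the dilate whose inscribed circle has radius $1$, the discussion before the theorem gives $L(r)=r\,L(1)$, hence $A(r)=\tfrac12 L(1)\,r^{2}$; differentiating, $A'(r)=L(1)\,r=L(r)$, which is the claim.

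The three-dimensional half follows the same outline with triangles replaced by pyramids. Center the largest inscribed sphere of $\Omega$ at the origin; by hypothesis it has radius $r$ and is tangent to every facet, so by Lemma 1.3 the plane of each facet lies at distance exactly $r$ from the origin. Coning each facet $F_i$ to the origin cuts the region bounded by $\Omega$ into finitely many pyramids, the one over $F_i$ having base $F_i$ and apex at distance $r$ from the plane of $F_i$. Because the pyramid-volume formula $\tfrac13(\text{base area})(\text{altitude})$ is valid for an arbitrary planar base region — here $F_i$ is only assumed to have piecewise smooth boundary, which is precisely the stated hypothesis — this pyramid has volume $\tfrac13 r\,a_i$, where $a_i$ is the area of $F_i$. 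Summing, $V(r)=\tfrac13\,r\,\sigma(r)$; and since dilating by $r$ scales each facet area by $r^{2}$, the paragraph before the theorem gives $\sigma(r)=r^{2}\sigma(1)$, so $V(r)=\tfrac13\sigma(1)\,r^{3}$ and $V'(r)=\sigma(1)\,r^{2}=\sigma(r)$.

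The only step that genuinely needs care — and the one I expect to be the real obstacle — is the claim that coning the edges (resp.\ facets) to the center of the largest inscribed circle (resp.\ sphere) really tiles the enclosed region, i.e.\ that this region is star-shaped about that center. This is automatic when $W$ (resp.\ $\Omega$) is convex, so one clean option is to add convexity to the hypotheses — the cube, the tetrahedron and the regular $n$-gons of the introduction are all convex. Alternatively one can avoid the decomposition altogether by a flux computation: applying the divergence theorem in the plane (resp.\ in space) to the field $\vec F(\vec x)=\vec x$, whose divergence is $2$ (resp.\ $3$), gives $2A(r)=\oint_{W}\vec x\cdot\hat n\,ds$ and $3V(r)=\iint_{\Omega}\vec x\cdot\hat n\,dS$; on each edge (resp.\ facet) the integrand $\vec x\cdot\hat n$ equals the signed distance from the origin to the carrying line (resp.\ plane), which is the constant $r$ because the origin lies on the inner side and the inscribed circle (resp.\ sphere) is tangent there, so the boundary integral collapses to $r\,L(r)$ (resp.\ $r\,\sigma(r)$) and we recover $A(r)=\tfrac12 rL(r)$ and $V(r)=\tfrac13 r\sigma(r)$ with no star-shapedness needed. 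I would run the pyramid argument as the main line, to stay within the paper's first-year-calculus aim, and mention the flux version only as a remark; beyond that, the proof is just those two identities, the scaling, and a one-line differentiation.
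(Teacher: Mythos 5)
Your proof is correct, but it is a genuinely different argument from the one in the paper. The paper never produces a closed form for $A$ or $V$: it proves the triangle and tetrahedron cases (offered as illustrative of the general one) by a direct difference-quotient squeeze, comparing $W$ with its shrunken copy $\frac{s}{r}W$, whose corresponding edges (facets) lie at the uniform distance $r-s$ by Lemma 1.2 (Lemma 1.3), and sandwiching $A(r)-A(s)$ between $L(s)[r-s]$ and $L(r)[r-s]$ (respectively $V(r)-V(s)$ between $\sigma(s)[r-s]$ and $\sigma(r)[r-s]$) before letting $s\to r$. You instead establish the exact identities $A(r)=\frac{1}{2}\,r\,L(r)$ and $V(r)=\frac{1}{3}\,r\,\sigma(r)$ by coning the edges or facets to the incenter (or by the flux of $\vec{F}(\vec{x})=\vec{x}$), feed in the scaling relations the paper records just before the theorem, and differentiate the resulting monomial in $r$. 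What each buys: your route gives explicit formulas, and in its divergence-theorem form it treats every $W$ and $\Omega$ allowed by the hypotheses in one stroke, with no star-shapedness or convexity concerns --- the issue you rightly single out, and which the paper sidesteps by writing out only the triangle and the tetrahedron (its sandwich inequality would itself need a geometric justification of much the same flavor for a general $W$ or $\Omega$). The paper's squeeze argument, in turn, stays at the level of a raw difference quotient, making the mechanism visible (the area increment is a boundary strip of width $r-s$), which is closer to the article's first-year-calculus aim, whereas your main line leans on the apothem/pyramid decomposition or Green--Gauss, slightly heavier but entirely standard machinery.
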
   

\begin{proof}
We establish the result in the cases that $W$ is a triangle and that $\Omega$ is a tetrahedron. These will be quite illustrative of how things go in general. 

Let $W$ be any triangle. Place a small circle inside $W$, near a corner of $W$, so that it is tangent to the two edges of $W$
which form that corner. Move the circle away from the corner and expand the circle so that it remains tangent to these two edges until the circle 
meets the third edge of $W$. We thus find a circle inscribed in $W$ that is tangent to all three edges of $W$; which, by construction, is the largest inscribed circle
in $W$. This is sometimes called the inscribed circle, or \textit{incircle}, of the triangle $W$. Let $r$ be the radius of this circle. As before, we may assume that this circle is centered 
at the origin in $\mathbb{R}^2$. For $0 < s < r$, consider $\frac{s}{r}W$, which is a triangle inside $W$ that has sides parallel to corresponding sides of $W$ and has its 
largest inscribed circle equal to $\{\vec{x}: \|\vec{x}\| = s\}$. Moreover, the gaps between the sides of $W$ and the corresponding sides of $\frac{s}{r}W$ are all 
equal to $r-s$; see Figure. 

\setlength{\unitlength}{1.5cm}
\begin{picture}(6, 4)
 \linethickness{0.5mm}
  \put(2, 3){\line(1, -2){.7}}
  \put(2.69, 1.6){\line(3, 1){1.76}}
  \put(2, 3){\line(3, -1){2.45}}
  \put(2.9, 2.75){$\swarrow$}
  \put(3.2, 3.2){$\frac{s}{r}W$}
  \put(4.9, 2.3){$\swarrow$}
  \put(5.3, 2.7){$W$}
  \put(1.7, 3.25){\line(1, -2){.915}}
  \put(2.62, 1.42){\line(3, 1){2.27}}
  \put(1.7, 3.25){\line(3, -1){3.19}}
  \put(2.5, 0){\mbox{\Large{Figure}}}
  \end{picture}
  \vspace*{.5in}

\noindent Let $A(r)$ be the area of the region enclosed by $W$ and 
let $A(s)$ be the area of the region enclosed by $\frac{s}{r}W$. Let $L(r)$ be the length of the curve $W$ and let $L(s)$ be the length of the curve $\frac{s}{r}W$.
Notice that
$$L(s)[r-s] < A(r) - A(s) < L(r)[r-s]\,\,\, \mbox{and that}\,\,\, L(s)\rightarrow L(r),\,\,\,\mbox{as}\,\,\, s\rightarrow r.$$
Therefore, 
$$(A(r) - A(s))/(r-s)\rightarrow L(r),\,\,\,\mbox{as}\,\,\, s\rightarrow r.$$
The same type of argument holds for $s > r$, and we find that the derivative of $A(r)$ with respect to $r$ is $L(r)$.

We now let $\Omega$ be any tetrahedron. Our proof here mimicks the proof for the triangle. Place a small sphere inside $\Omega$, near a corner of $\Omega$, so that it is 
tangent to the three facets of $\Omega$ which form that corner. Move the sphere away from the corner and expand the sphere so that it remains tangent to these three 
facets until it meets the fourth facet of $\Omega$. In this way we find an inscribed sphere inside $\Omega$ that is tangent to all four facets of $\Omega$. By construction, 
this sphere is the largest inscribed sphere in $\Omega$. We may assume that this sphere is centered at the origin in $\mathbb{R}^3$ and we let $r$ be the radius of this
sphere. Proceeding as in the case of the triangle, but with $\Omega$ in place of $W$, let $V(r)$ be the volume of the region enclosed by $\Omega$ and let $V(s)$ be the
volume of the region enclosed by $\frac{s}{r}\Omega$. Let $\sigma(r)$ be the surface area of $\Omega$ and let $\sigma(s)$ be the surface area of $\frac{s}{r}\Omega$. 
Then we have:
$$\sigma(s)[r-s] < V(r) - V(s) < \sigma(r)[r-s]\,\,\, \mbox{and that}\,\,\, \sigma(s)\rightarrow \sigma(r),\,\,\,\mbox{as}\,\,\, s\rightarrow r.$$
Therefore, 
$$(V(r) - V(s))/(r-s)\rightarrow \sigma(r),\,\,\,\mbox{as}\,\,\, s\rightarrow r.$$
The same type of argument holds for $s > r$, and we find that the derivative of $V(r)$ with respect to $r$ is $\sigma(r)$.

\end{proof}

The proof of Theorem 1.4 outlines the argument in general and establishes the result for any triangle (in the two dimensional case)
and any tetrahedron (in the three dimensional case). Unfortunately, the strategy of defining things in terms of a largest inscribed circle 
or sphere does not work well in general. It does not even work for nonsquare rectangles. For an approach that works in greater generality
one may consult \cite{JT} and articles that reference this paper.

There is another idea that generalizes to curves 
like rectangles, indeed to most piecewise smooth Jordan curves (cf., \cite{M} and \cite[Section 4.2]{HG}), though there is no nice parameter $r$, as we have above. 
Take a circle (sphere, in the three dimensional case) of radius $\varepsilon > 0$ and roll it around the curve (or surface) 
so that its center traces out a curve (or surface) on the inside that has gap, on normal lines, to the curve (or surface) in question, 
equal to $\varepsilon$. In this way one can generate a family of curves (or surfaces) which expand out to the curve or surface in question 
and one can take a derivative (as $\varepsilon$ tends to zero) of the area (or volume) of the region bounded by the curve (or surface). 
This certainly works for all rectangles and many curves that are piecewise smooth. We outline this for a 
rectangle with sidelengths $a$ and $b$. For small $\varepsilon > 0$, roll a circle on the inside of the rectangle, up against the curve, 
to trace out a Jordan curve that is inside the rectangle. That Jordan curve, in this case, is actually a rectangle of sidelengths $a - 2\varepsilon$ 
and $b-2\varepsilon$; provided $\varepsilon$ is sufficiently small. Taking the ``derivative'' of the area, 
$$\lim_{\varepsilon\rightarrow 0}[ab- (a-2\varepsilon)(b-2\varepsilon)]/\varepsilon = 2a +2b,$$
we have the perimeter of our original rectangle. So our theme here prevails, though this method is less natural than the earlier one.
\vspace*{.5in}

\noindent\textbf{Acknowledgement.} We are grateful to John Duncan for comments that have helped to improve the exposition
of this paper.
\bigskip

\end{document}